\documentclass{amsart}
\usepackage{graphicx}
\usepackage{amssymb, amsmath}
\usepackage{graphicx}
\usepackage{tikz}
\usepackage{cleveref}
\usepackage{bm}
\usepackage{xfrac}

\newtheorem{theorem}{Theorem}[section]
\newtheorem{corollary}[theorem]{Corollary}

\theoremstyle{definition}
\newtheorem{definition}[theorem]{Definition}

\newtheorem{example}[theorem]{Example}

\theoremstyle{remark}

\numberwithin{equation}{section}

\title[Magnitude and nonnegative weightings]{Continuity of the magnitude for finite metric spaces with nonnegative weightings}

\author{Yuki Hiyoshi}
\address{Department of Mathematics, The University of Osaka, Japan}
\email{u565398f@ecs.osaka-u.ac.jp}

\begin{document}

\begin{abstract}
For finite metric spaces, magnitude is an invariant known to be discontinuous everywhere with respect to the Gromov-Hausdorff distance. While much of the literature focuses on positive definite metric spaces, here we consider finite metric spaces admitting a nonnegative weighting. In this paper, we focus on the class of finite metric spaces that admit a nonnegative weighting. This class includes ultrametric spaces, for which specific bounds on magnitude are known. We generalize these bounds by proving that, for any two finite metric spaces with nonnegative weightings, the ratio of their magnitudes is exponentially bounded by their Gromov-Hausdorff distance. Consequently, we show that within this class, the logarithmic magnitude function is Lipschitz continuous, and the magnitude function itself is locally Lipschitz continuous, but not Lipschitz continuous.
\end{abstract}

\maketitle

\setcounter{tocdepth}{1}


\section{Introduction}

The concept of magnitude for metric spaces, introduced by Leinster \cite{Leinster2013}, is a fundamental invariant in metric geometry. Let $(X, d_X)$ be a finite metric space. The zeta matrix of $X$ is defined as $Z_X = (e^{-d_X(x, y)})_{x, y \in X}$. A weighting on $X$ is a vector $w_X = (w_x)_{x \in X} \in \mathbb{R}^X$ satisfying $Z_X w_X = \mathbf{1}$, where $\mathbf{1}$ is the column vector with all entries equal to $1$. If a weighting exists, the magnitude of $X$ is defined as the sum of its weights, $\mathrm{Mag}(X) = \sum_{x \in X} w_x$. A weighting $w_X$ is said to be nonnegative if $w_x \geq 0$ for every $x \in X$. 

To discuss the continuity of magnitude, it is necessary to consider the distances between metric spaces \cite{bh-non}. Let $Z$ be a metric space, and let $A, B$ be subsets of $Z$. The Hausdorff distance between $A$ and $B$, denoted by $d_H(A, B)$, is defined as the infimum of all $\varepsilon > 0$ such that $A$ is contained in the $\varepsilon$-neighborhood of $B$, and $B$ is contained in the $\varepsilon$-neighborhood of $A$. Based on this, the Gromov-Hausdorff distance $d_{GH}(X, Y)$ between two abstract metric spaces $X$ and $Y$ is defined as the infimum of the Hausdorff distances $d_H^d(X, Y)$ taken over all possible metrics $d$ on the disjoint union $X \sqcup Y$ that restrict to the given metrics $d_X$ and $d_Y$ on $X$ and $Y$, respectively.

Regarding continuity with respect to the Gromov-Hausdorff distance, it is known that the magnitude function is discontinuous everywhere\cite[Thm.2.5]{kry-con}. An important class on which magnitude has good properties is the class of positive definite metric spaces. Here, a finite metric space $X$ is called positive definite if its zeta matrix $Z_X$ is positive definite. See, for example, \cite{Meckes2013}. Positive definite metric spaces have several useful properties, but nonnegative weightings also arise naturally beyond the positive definite setting. An example is provided by ultrametric spaces. Although positive definite spaces exhibit many nice analytic properties, another important direction arises from the study of ultrametric spaces.

Leinster proved that a finite ultrametric space has a nonnegative weighting. Furthermore, he established that the magnitude of an ultrametric space $X$ satisfies the bound $1 \leq \mathrm{Mag}(X) \leq e^{\mathrm{diam}(X)}$, where $\mathrm{diam}(X)$ is the diameter of $X$ \cite[Prop.2.4.18 and Cor.2.4.19]{Leinster2013}. This suggests that the existence of a nonnegative weighting plays a crucial role in controlling the analytic behavior of magnitude.

Motivated by these observations, we establish continuity for the class of finite metric spaces admitting a nonnegative weighting. By proving that the magnitude ratio is exponentially bounded by the Gromov-Hausdorff distance, we provide a natural generalization of the diameter bound for ultrametric spaces. 

\section{Preliminaries and Motivating Examples}
We begin by defining the primary class of metric spaces that will be the focus of our study.

\begin{definition}
\[
\mathrm{FMet}_{\geq 0} = \{X \mid X \text{ is a finite metric space with a nonnegative weighting}\}.
\]
\end{definition}

As mentioned in the introduction, an important family of spaces within this class is the ultrametric spaces.

\begin{example}
Any finite ultrametric space admits a nonnegative weighting, and therefore belongs to $\mathrm{FMet}_{\geq 0}$.
\end{example}

To clarify the scope of our study, we note that having a nonnegative weighting and being a positive definite space are two independent properties. The following examples illustrate this independence.

\begin{example}
Let $K_{n, m}$ be the graph with vertices $a_1, \dots, a_n$, $b_1, \dots, b_m$ and one edge between $a_i$ and $b_j$ for each $i$ and $j$.
\begin{enumerate}
    \item[(1)] $X =(\log\frac{3}{2})K_{3, 3}$ has a nonnegative weighting but is not positive definite. Indeed, all components of the weighting are $\frac{9}{35} >0$, while the zeta matrix $Z_X$ has an eigenvalue of $-\frac{1}{9}<0$.
    \item[(2)] $Y =(\log\frac{3}{2})K_{1, 3}$ is positive definite but does not have a nonnegative weighting. Indeed, the eigenvalues of $Z_Y$ are $\frac{5}{9}$(with multiplicity $2$) and $\frac{13 \pm 2\sqrt{31}}{9} > 0$, while one of the components of $Y$ has the weighting is $-\frac{1}{5} < 0$.
\end{enumerate}
\end{example}

\section{Continuity of Magnitude on $\mathrm{FMet}_{\geq 0}$}
Having established that $\mathrm{FMet}_{\geq 0}$ encompasses spaces that are not necessarily positive definite, we now investigate the analytic properties of magnitude within this class. The following theorem establishes a fundamental bound on the ratio of magnitudes in terms of the Gromov-Hausdorff distance. This bound generalizes the behavior observed in ultrametric spaces.

\begin{theorem}
Let $X, Y \in \mathrm{FMet}_{\geq 0}$. Then
\[
e^{-2d_{GH}(X, Y)} \leq \frac{\mathrm{Mag}(X)}{\mathrm{Mag}(Y)} \leq e^{2d_{GH}(X, Y)}.
\]

\begin{proof}
Fix $\eta > 0$. Take a metric $d$ on $X \sqcup Y$ that restricts to the given metrics on $X$ and $Y$ such that $\delta = d_H^d(X, Y) < d_{GH}(X, Y) + \eta$.

Let $f: X \to Y$ and $g: Y \to X$ be the mappings defined as follows:
For each $x \in X$, choose $f(x)$ such that $d(x, f(x)) \leq \delta$.
Similarly, for each $y \in Y$, choose $g(y)$ such that $d(y, g(y)) \leq \delta$.

Let $w_X$ and $w_Y$ be the weightings for $X$ and $Y$, respectively. Define $\tilde{w}_y$ for each $y \in Y$ as
\[
\tilde{w}_y = \sum_{x \in f^{-1}(y)} (w_X)_x \geq 0,
\]
where $\tilde{w}_y = 0$ if $f^{-1}(y) = \emptyset$.

By the triangle inequality, it follows that
\[
\forall x \in X, \forall y \in Y, \quad d_X(g(y), x) - 2\delta \leq d_Y(y, f(x)) \leq d_X(g(y), x) + 2\delta.
\]

Hence we have
\[
e^{-2\delta}e^{-d_X(g(y), x)} \leq e^{-d_Y(y, f(x))} \leq e^{2\delta}e^{-d_X(g(y), x)}.
\]

Multiplying by $(w_X)_x$ and summing over $x \in X$, we obtain
\[
e^{-2\delta}\sum_{x \in X} e^{-d_X(g(y), x)}(w_X)_x \leq \sum_{x \in X}e^{-d_Y(y, f(x))} (w_X)_x \leq e^{2\delta}\sum_{x \in X} e^{-d_X(g(y), x)}(w_X)_x.
\]

Since $w_X$ is a weighting,
\[
\sum_{x \in X}e^{-d_X(g(y), x)}(w_X)_x = 1,
\]
and note that
\[
\begin{aligned}
\sum_{x \in X} e^{-d_Y(y, f(x))}(w_X)_x 
&= \sum_{y' \in Y} \sum_{x \in f^{-1}(y')} e^{-d_Y(y, y')}(w_X)_x \\
&= \sum_{y' \in Y} e^{-d_Y(y, y')} \sum_{x \in f^{-1}(y')} (w_X)_x \\
&= (Z_Y \tilde{w})_y.
\end{aligned}
\]

Thus, the inequality simplifies to
\[
e^{-2\delta} \leq (Z_Y \tilde{w})_y \leq e^{2\delta}.
\]

Summing over $y \in Y$ against $(w_Y)_y$, we have
\[
e^{-2\delta} \mathrm{Mag}(Y)\leq \sum_{y \in Y}(w_Y)_y(Z_Y \tilde{w})_y \leq e^{2\delta} \mathrm{Mag}(Y).
\]

The middle term is

\[
\begin{aligned}
\sum_{y \in Y}(w_Y)_y(Z_Y \tilde{w})_y 
&= w_Y^T(Z_Y \tilde{w}) \\
&= (Z_Y w_Y)^T \tilde{w} \\
&= (1 \dots 1) \tilde{w} \\
&= \sum_{y \in Y} \tilde{w}_y \\
&= \sum_{y \in Y} \sum_{x \in f^{-1}(y)}(w_X)_x \\
&= \sum_{x \in X}(w_X)_x \\
&= \mathrm{Mag}(X).
\end{aligned}
\]

Hence, we obtain
\[
e^{-2\delta}\mathrm{Mag}(Y) \leq \mathrm{Mag}(X) \leq e^{2\delta}\mathrm{Mag}(Y).
\]

Since $\eta > 0$ is arbitrary and $\delta < d_{GH}(X, Y) + \eta$, we conclude that
\[
e^{-2d_{GH}(X, Y)} \leq \frac{\mathrm{Mag}(X)}{\mathrm{Mag}(Y)} \leq e^{2d_{GH}(X, Y)}.
\]
\end{proof}
\end{theorem}

An immediate consequence of this theorem can be obtained by taking the logarithm of both sides of the inequality, which yields:
\[
|\log \mathrm{Mag}(X) - \log \mathrm{Mag}(Y)| \leq 2d_{GH}(X, Y).
\]
This directly implies the following Lipschitz continuity.

\begin{corollary}
The mapping 
\[
\log \mathrm{Mag} \colon \mathrm{FMet}_{\geq 0} \to \mathbb{R}, \quad X \mapsto \log \mathrm{Mag}(X)
\]
is Lipschitz continuous.
\end{corollary}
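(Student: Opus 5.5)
The plan is to derive the corollary directly from the preceding Theorem, with no new estimate needed. First I check that $\log \mathrm{Mag}$ is well defined on $\mathrm{FMet}_{\geq 0}$, i.e.\ that $\mathrm{Mag}(X) > 0$ for every $X$ in the class. Let $w_X$ be a nonnegative weighting. For any $x_0 \in X$ we have $\sum_x e^{-d_X(x_0,x)}(w_X)_x = 1$. Since each $e^{-d_X(x_0,x)} \le 1$ and each $(w_X)_x \ge 0$, this gives $\sum_x (w_X)_x \ge 1$, so $\mathrm{Mag}(X) \ge 1 > 0$. This recovers the lower bound Leinster proved for ultrametric spaces.

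The magnitude does not depend on which weighting is chosen. If $w, w'$ are weightings, then $\mathbf{1}^T w' = (Z_X w)^T w' = w^T Z_X w' = w^T\mathbf{1}$, using that $Z_X$ is symmetric. Hence $\log \mathrm{Mag}(X)$ is a well-defined real number.

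Next I apply the Theorem to arbitrary $X, Y \in \mathrm{FMet}_{\geq 0}$. Both magnitudes are positive, and $\log$ is monotone on $(0,\infty)$, so taking logarithms of
\[
e^{-2d_{GH}(X,Y)} \le \mathrm{Mag}(X)/\mathrm{Mag}(Y) \le e^{2d_{GH}(X,Y)}
\]
gives $|\log \mathrm{Mag}(X) - \log \mathrm{Mag}(Y)| \le 2\, d_{GH}(X,Y)$. This is the Lipschitz condition with constant $2$, with respect to the Gromov--Hausdorff distance on $\mathrm{FMet}_{\geq 0}$.

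Finally, $d_{GH}$ is only a pseudometric on abstract spaces, so I would remark that isometric spaces have equal magnitude and $d_{GH}=0$. Thus the map factors through isometry classes, where $d_{GH}$ is a genuine metric on finite spaces, and no consistency problem arises. The only step requiring care is the positivity of $\mathrm{Mag}$, and that is the short argument above. Everything else is immediate from the Theorem.
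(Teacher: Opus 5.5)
Your proposal is correct and matches the paper's route: the paper takes logarithms in the Theorem to get $|\log \mathrm{Mag}(X) - \log \mathrm{Mag}(Y)| \leq 2d_{GH}(X, Y)$, which is Lipschitz continuity with constant $2$. The paper leaves implicit that $\mathrm{Mag}(X) \geq 1$ (so the logarithm is defined) and that the magnitude does not depend on the choice of weighting; your arguments for both are valid.
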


The magnitude itself satisfies the following.

\begin{corollary}
The mapping 
\[
\mathrm{Mag} \colon \mathrm{FMet}_{\geq 0} \to \mathbb{R}, \quad X \mapsto \mathrm{Mag}(X)
\]
is locally Lipschitz continuous, but it is not Lipschitz continuous.
\begin{proof}
First, we will show that it is locally Lipschitz continuous.
By the preceding theorem, we have
\[
(e^{-2d_{GH}(X, Y)} - 1)\mathrm{Mag}(Y) \leq \mathrm{Mag}(X) - \mathrm{Mag}(Y) \leq (e^{2d_{GH}(X, Y)} - 1)\mathrm{Mag}(Y).
\]

Observe that $1 - e^{-t} \leq e^t - 1$ for all $t \geq 0$. Thus, it follows that
\begin{align*}
|\mathrm{Mag}(X) - \mathrm{Mag}(Y)| 
&\leq \mathrm{Mag}(Y)(e^{2d_{GH}(X, Y)} - 1) \\
&\leq 2\mathrm{Mag}(Y) e^{2d_{GH}(X, Y)} d_{GH}(X, Y),
\end{align*}
where the last inequality follows from the fact that $e^{2t} - 1 \leq 2te^{2t}$ for all $t \geq 0$.

Next, we will show that it is not Lipschitz continuous.
Let $A = \{*\}$ and $B =(\log(n-1))K_n$ for $n \geq 3$, where $K_n$ is a complete graph with $n$ vertices.
Since $\mathrm{Mag}(A) = 1$ and $\mathrm{Mag}(B) = \frac{n}{2}$, we have
\[
|\mathrm{Mag}(A) - \mathrm{Mag}(B)| = \frac{n}{2} - 1.
\]

Furthermore, since $d_{GH}(A, B) = \frac{\log(n-1)}{2}$, it follows that
\[
\frac{|\mathrm{Mag}(A) - \mathrm{Mag}(B)|}{d_{GH}(A, B)} = \frac{n - 2}{\log(n - 1)} \to \infty \quad \text{as } n \to \infty.
\]

Therefore, there does not exist a constant $L \geq 0$ such that 
\[
\frac{|\mathrm{Mag}(X) - \mathrm{Mag}(Y)|}{d_{GH}(X, Y)} \leq L
\]
for all $X, Y \in \mathrm{FMet}_{\geq 0}$.
\end{proof}
\end{corollary}

\textbf{Acknowledgements.}
I would like to thank Professor Masahiko Yoshinaga for his generous guidance and advice.

\end{document}